\newtheorem{theorem}{Theorem}[section]
\newtheorem{lemma}[theorem]{Lemma}
\theoremstyle{definition}
\newtheorem{definition}[theorem]{Definition}
\newtheorem{example}[theorem]{Example}
\newtheorem{remark}[theorem]{Remark}
\newtheorem{corollary}[theorem]{Corollary}
\newtheorem{notation}[theorem]{Notation}
\newtheorem{proposition}[theorem]{Proposition}
\numberwithin{equation}{section}
\pgfplotsset{compat = newest}
\begin{document}

% \title[short text for running head]{full title}
\title{On the scale-freeness of random colored substitution networks}

%    Only \author and \address are required; other information is
%    optional.  Remove any unused author tags.

%    author one information
% \author[short version for running head]{name for top of paper}
\author{Nero Ziyu Li}
\address{Nero Ziyu Li: Department of Mathematics, Imperial College London, South Kensington Campus, London SW7 2AZ, United Kingdom.}
%%%%%\curraddr{}
\email{z5222549@zmail.unsw.edu.au, ziyu.li21@imperial.ac.uk}
\thanks{The first author has been supported by
the Additional Funding Programme for Mathematical Sciences, delivered by EPSRC (EP/V521917/1) and the Heilbronn Institute for Mathematical Research, and also by the EPSRC Centre for Doctoral Training in Mathematics of Random Systems: Analysis, Modelling and Simulation (EP/S023925/1).}

%    author two information
\author{Thomas Britz}
\address{Thomas Britz: School of Mathematics and Statistics, UNSW Sydney, Sydney NSW 2052, Australia.}
%%%%%\curraddr{}
\email{britz@unsw.edu.au}
%%%%%\thanks{}

%    \subjclass is required.
\subjclass[2020]{Primary 05C82, 28A80; Secondary 05C80}

\date{}

\dedicatory{}

%    "Communicated by" -- provide editor's name; required.
\commby{Zhen-Qing Chen}

%    Abstract is required.
\begin{abstract}
Extending previous results in the literature, 
random colored substitution networks and degree dimension are defined in this paper.
The scale-freeness of these networks is proved by introducing 
a new definition for degree dimension that is associated with Lyapunov exponents.
The random colored substitution network hence turns out to be a simple, 
powerful and promising model to generate random scale-free networks.
\end{abstract}

\maketitle

%%%%%%%%%%%%%%%%%%%%
% Lead Paragraph
%\noindent
%{\bf 
Many real-life phenomena are fractals in nature, 
including growing networks found in biology, brain connections and in social interactions.
Previous researchers introduced a mathematical model called {\em substitution networks},
to simulate the growth of the networks by iteratively replacing each arc of a network by smaller networks.
This model was later expanded by the introduction of arc colors to allow more types of arc replacements. 
However, these models are deterministic and do not allow for the randomness that real-life growth networks can exhibit. 
To capture this randomness, 
we expand the model to what we call {\em random colored substitution networks}, 
by allowing each arc to be replaced by a random choice of network.
We describe the properties of the randomly resulting networks, 
including their number of nodes and arcs and their node degrees.
Our main result shows that these random colored substitution networks are almost surely {\em scale-free} 
and that they therefore have a particular type of structure.
%}

%%%%%%%%%%%%%%%%%%%%%%%%%%%%%%%%%%%%%%%%%%%%%%%%%
\section{Introduction}
\label{sec:introduction}

The property of {\em scale-freeness} of complex networks was first proposed in 1999 by Albert-Laszlo Barabasi and Reka Albert~\cite{BaAl99}. 
Their model presents a graph that grows by the addition of new nodes and their incident edges to existing nodes.
The probability that a new node is chosen to be adjacent to an old node depends in this model on the degree of the old node.
This model is shown to be {\em scale-free}, 
which refers to the phenomenon of networks having node degrees that obey the power-law distribute on.
In particular, the fraction $P(k)$ of nodes in the network $G$ that are adjacent to~$k$ other nodes is
\begin{equation}
\label{eq:original scale-free}
  P(k)\sim k^{-\delta}
\end{equation}
where $\sim$ denotes approximation and $\delta$ is some constant. 
Scale-free networks have appeared, among other places, in studies on 
         biology~\cites{Biology5,Biology4,Biology1,Biology2,Biology3}, 
         finance~\cites{Finance3,Finance2,Finance1} and 
computer science~\cites{Computer1,Computer2}.

Barabasi and Albert's model is suitable for modelling many real-world complex networks.
However, for modelling growth processes that evolve in fractal-like ways, 
a more suitable type of complex network model is the {\em substitution network}.
These networks were first introduced by Xi et al.~\cite{XiWaWaYuWa17} 
and are models featuring networks whose arcs are at each step replaced by some fixed network.
An example of the first four steps of such a substitution network is given in Fig.~\ref{fig:substitution}.

\definecolor{myaqua}{RGB}{15,205,255}

\newcommand{\redarc}[2]{\draw[draw=red,             thick,>=stealth,->] (#1) -- (#2);\fill[black] (#1) circle (1.5pt);\fill[black] (#2) circle (1.5pt);}
\newcommand{\bluearc}[2]{\draw[draw=myaqua!90!black,thick,>=stealth,->] (#1) -- (#2);\fill[black] (#1) circle (1.5pt);\fill[black] (#2) circle (1.5pt);}

\newcommand{\mynode}[2]{\begin{scope}[shift={(#1)},scale=#2]\node[node] (0,0) () {};\end{scope}}
\newcommand{\Rrule}[4]{\begin{scope}[shift={(#1)},scale=#2,rotate=#3]
  \foreach \nn/\x/\y in {0/0/0, 1/1/0, 2/1/1, 3/2/0, 4/2/1, 5/3/0}{\node[node] (\nn) at (\x,\y) {};}
  \draw[black] (0) -- (5) (1) -- (2) (3) -- (4);
  \foreach \nn in {0,...,5}{\node[node] () at (\nn) {};}#4\end{scope}}
\newcommand{\RRrule}[3]{\begin{scope}[shift={(#1)},scale=#2,rotate=#3]
    \Rrule{0,0}{1}{  0}{}
    \Rrule{6,0}{1}{180}{}
    \Rrule{6,0}{1}{  0}{}
    \Rrule{3,3}{1}{270}{}
    \Rrule{6,0}{1}{ 90}{}\end{scope}}
    
\newcommand{\Rrulea}[4]{\begin{scope}[shift={(#1)},scale=#2,rotate=#3]
  \foreach \na in {0,...,4}{\coordinate (\na) at (72*\na-162:1.577) {};}
  \draw[draw=red,>=stealth,->] (0) -- (1);
  \draw[draw=red,>=stealth,->] (1) -- (2);
  \draw[draw=red,>=stealth,->] (4) -- (3);
  \draw[draw=myaqua!90!black,>=stealth,->] (3) -- (2);
  \draw[draw=myaqua!90!black,>=stealth,->] (0) -- (4);
  \foreach \na in {1,3,4}{\fill[black] (\na) circle (1.5pt);}
  \foreach \na in {0,2}{\fill[black] (\na) circle (2.25pt);}#4\end{scope}}

\newcommand{\Rruleb}[4]{\begin{scope}[shift={(#1)},scale=#2,rotate=#3]
\foreach \na in {0,...,5}{\pgfmathsetmacro\x{2.12*cos(18*\na+45)}
\pgfmathsetmacro\y{2.12*sin(18*\na+45)-1.5}
\coordinate (\na) at (\x,\y) {};}
\foreach \na in {6,...,9}{\pgfmathsetmacro\x{2.12*cos(18*\na+135)}
\pgfmathsetmacro\y{2.12*sin(18*\na+135)+1.5}
\coordinate (\na) at (\x,\y) {};}
  \draw[draw=red,>=stealth,->] (9) -- (0);
  \draw[draw=red,>=stealth,->] (1) -- (2);
  \draw[draw=red,>=stealth,->] (3) -- (2);  
  \draw[draw=red,>=stealth,->] (3) -- (4);
  \draw[draw=red,>=stealth,->] (4) -- (5);
  \draw[draw=myaqua!90!black,>=stealth,->] (0) -- (1); \draw[draw=myaqua!90!black,>=stealth,->] (5) -- (6);
  \draw[draw=myaqua!90!black,>=stealth,->] (7) -- (6);
  \draw[draw=myaqua!90!black,>=stealth,->] (8) -- (7); \draw[draw=myaqua!90!black,>=stealth,->] (8) -- (9);
  \foreach \na in {0,...,9}{\fill[black] (\na) circle (1.5pt);}
  \foreach \na in {0,5}{\fill[black] (\na) circle (2.25pt);}#4\end{scope}}
  
\newcommand{\RanRrule}[5]{\begin{scope}[shift={(#1)},scale=#2,rotate=#3]#4
\fill[black] (0,0) circle (2.25pt);\draw (0,-1) node(){};
\fill[black] (3,0) circle (2.25pt);#5\end{scope}}

\newcommand{\Raa}[4]{\RanRrule{#1}{#2}{#3}{\redarc{1.5,-0.5}{0,0}\redarc{3,0}{1.5,-0.5}
\bluearc{0,0}{1.5,0.5}\bluearc{1.5,0.5}{3,0}}{#4}}
\newcommand{\Rab}[4]{\RanRrule{#1}{#2}{#3}{\redarc{0,0}{1.5,0}\redarc{1.5,0}{3,0}\bluearc{1.5,-0.5}{0,0}\bluearc{3,0}{1.5,0.5} }{#4}}
\newcommand{\Rba}[4]{\RanRrule{#1}{#2}{#3}{\redarc{0,0}{1,0}\redarc{3,0}{2,0}\redarc{2,0.5}{3,0}\redarc{2,-0.5}{3,0}\redarc{1,-0.5}{0,0}
\bluearc{0,0}{1,0.5}\bluearc{1,0}{2,0}}{#4}}
\newcommand{\Rbb}[4]{\RanRrule{#1}{#2}{#3}{\redarc{1,0.5}{0,0}\redarc{2,0}{1,0}
\bluearc{0,0}{1,-0.5}\bluearc{1,0}{0,0}\bluearc{2,0}{3,0}\bluearc{3,0}{2,0.5}\bluearc{3,0}{2,-0.5}}{#4}}
\newcommand{\ABnodes}{\draw (0,-.25) node[below] {\makebox(0,0){\footnotesize $A$}};\draw (3,-.25) node[below] {\makebox(0,0){\footnotesize $B$}};}

\begin{figure}[ht]
\centering
\begin{tikzpicture}[shift={(0,0)},scale=.5,black,thick]
  \tikzstyle{node}=[circle,fill=white!75!black,draw=black ,inner sep = 0.25mm, outer sep = 0mm]
  \begin{scope}[shift={(-12.125,0)},scale=1]
    \draw[very thick] (0,0) -- (1,0);
    \mynode{0,0}{5}
    \mynode{1,0}{5}
  \end{scope}
  \Rrule{-10,0}{.8}{0}{}
  \RRrule{-6.5,0}{.6}{0}
  \begin{scope}[shift={(0,0)},scale=.4]
    \RRrule{0,0}{1}{  0}{}
    \draw (-28.75,-5) node {$G^0$};
    \RRrule{18,0}{1}{180}{}
    \draw (-21.75,-5) node {$G^1$};
    \RRrule{18,0}{1}{  0}{}
    \draw (-9.5,-5) node {$G^2$};
    \RRrule{9,9}{1}{270}{}
    \draw (13.75,-5) node {$G^3$};
    \RRrule{18,0}{1}{ 90}{}\end{scope}
\end{tikzpicture}
\caption{A substitution network}
\label{fig:substitution}
\end{figure}
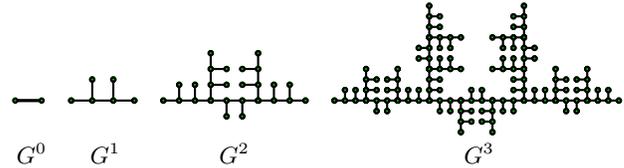

Xi et al.~\cite{XiWaWaYuWa17} proved that substitution networks have the scale-free property, as well as the related {\em fractality property}.
Li et al.~\cites{LiYaWa19,LiYuXi18} proved that substitution networks retain the scale-free property in the more general setting of colored arcs and fixed networks to replace each arc of a given color.
They also showed that certain of these colored substitution networks have the fractality property.
Similarly, substitutions of nodes, rather then arcs, were considered in~\cites{XiSuYa19,YaSuXi19}. 
By constructing self-similar networks, Yao et al.~\cite{YaSuXi19} proved that node substitution networks have the fractality property; see also~\cites{XiYe19b,YeGuXi19,YeXi19a}.

\bigskip

Although these substitution networks are suitable for modelling many real-world fractal-like phenomena, 
they are deterministic and do not reflect real-life randomness.

The purpose of this present paper is to address this limitation.
In particular, random colored substitution networks and their degree dimension are defined, 
and the main result of the paper, Theorem~\ref{thm:random scale-free}, 
extends the scale-freeness of substitution networks to random colored substitution networks.

%%%%%%%%%%%%%%%%%%%%%%%%%%%%%%%%%%%%%%%%%%%%%%%%%%%
\section{Random colored substitution networks}
\label{sec:random colored substitution networks}

Consider a directed network $G^0$ whose arcs are each colored by one of $\lambda$ colors.
Replace each arc of~$G^0$ according to its color as follows:
if arc $e$ has color~$i$,
then $e$ will be replaced randomly by a directed network $R_{ik}$ with probability $p_{ik}$,
among $q_i$ such directed networks (so $\sum_{k=1}^{q_i} p_{ik} = 1$ for each~$i$).
Each directed network $R_{ik}$ has a node $A$ and a node $B$
that respectively replace the beginning node $A$ and ending node $B$ of $e$;
this determines exactly how $R_{ik}$ replaces~$e$.
By replacing all arcs in $G^0$ randomly by the directed networks $R_{ik}$,
a directed network $G^1$ is obtained.
This replacement process iteratively defines a directed network $G^2$ from $G^1$,
a directed network $G^3$ from $G^2$, and so on.
After $t$ such iterations, 
a directed network $G^t = \big(V(G^t),E(G^t)\big)$ is obtained.
These graphs $R_{ij}$ are called {\em rule graphs}.

This iterative process and the resulting directed networks together form a {\em random colored substitution network}.
Throughout this paper, 
let $\mathcal{G}$ be the family of all possible sequences $\Gamma = (G^0, G^1, G^2, \ldots)$.
When such a sequence $\Gamma$ converges to a network, then we can identify $\Gamma$ as that network, 
together with the information on how it was generated.
That is, $\lim_{t\to\infty} G^t=\Gamma$.

An example of random colored substitution networks with arcs of $\lambda = 2$ colors arcs is given in Fig.~\ref{fig:rule-example}.
The red ($i=1$) arcs are each replaced at each step by 
the directed network $R_{11}$ with probability $\frac{1}{3}$ 
and the directed network $R_{12}$ with probability $\frac{2}{3}$. 
Each blue ($i=2$) arc is replaced either by $R_{21}$ or by $R_{22}$, 
with probabilities $\frac{1}{4}$ and $\frac{3}{4}$, respectively. 
One possible sequence $G^0, G^1, G^2, \ldots$ of the random colored substitution network is shown.

Note that (non-random) colored substitution networks form
the particular subclass of random colored substitution networks for which  $q_i = 1$ for all $i$.
Note also that it will be assumed in this paper that, for each color $i$, 
there is at least one integer $k$ and one integer $k'$ such that 
the distance between nodes $A$ and $B$ in the network $R_{ik}$ is greater than 1 and that, 
in $R_{ik'}$, 
the sum of the in-degree and the out-degree of at least one of the nodes $A$ and $B$ is greater than~1. 

\begin{remark}\label{rem:infinite}{\rm
These conditions ensure that the number of nodes in the substitution network and their degrees grow to infinity; 
this is proved in Section~\ref{sec:proof of random}.}
\end{remark}

\begin{figure}[ht]
\centering
\begin{tikzpicture}[scale=.75,shorten >= 1pt]
  \begin{scope}[scale=1,shift={(0,2)}]
    \bluearc{-1.25,0}{.75,0}
    \draw  (-1.25,0) node[below] {\footnotesize $A$};
    \draw  (  .75,0) node[below] {\footnotesize $B$};
    %\draw  (-2   ,0) node {$G^0$};
    \draw[thick,gray,>=stealth,->] (1.5,0) .. controls (2.5,0) and (2.5, .6) .. (3.5, .6) node[black,sloped,midway,above]{\footnotesize$p_{11} = \frac{1}{3}$};
    \draw[thick,gray,>=stealth,->] (1.5,0) .. controls (2.5,0) and (2.5,-.6) .. (3.5,-.6) node[black,sloped,midway,below]{\footnotesize$p_{12} = \frac{2}{3}$};
    \Raa{4, .57}{.67}{0}{\ABnodes\draw (4,0) node {\makebox(0,0){$R_{11}$}};}
    \Rab{4,-.6 }{.67}{0}{\ABnodes\draw (4,0) node {\makebox(0,0){$R_{12}$}};}
  \end{scope}
  \begin{scope}[scale=1,shift={(0,-1)}]
    \redarc{-1.25,0}{.75,0}
    \draw   (-1.25,0) node[below] {\footnotesize $A$};
    \draw   (  .75,0) node[below] {\footnotesize $B$};
    %\draw   (-2   ,0) node {$G^0$};
    \draw[gray,thick,>=stealth,->] (1.5,0) .. controls (2.5,0) and (2.5, .6) .. (3.5, .6) node[black,sloped,midway,above]{\footnotesize$p_{21} = \frac{1}{4}$};
    \draw[gray,thick,>=stealth,->] (1.5,0) .. controls (2.5,0) and (2.5,-.6) .. (3.5,-.6) node[black,sloped,midway,below]{\footnotesize$p_{22} = \frac{3}{4}$};
    \Rba{4,  .6}{.67}{0}{\ABnodes\draw (4,0) node {\makebox(0,0){$R_{21}$}};}
    \Rbb{4,-.6 }{.67}{0}{\ABnodes\draw (4,0) node {\makebox(0,0){$R_{22}$}};}
  \end{scope}
\end{tikzpicture}\\[5mm]
\begin{tikzpicture}[scale=.55]
  \bluearc{0,0}{2,0}
  \draw (1,-.5) node[below] {$G^0$};
  \draw[gray,thick,>=stealth,->] (2.5,0) --++ (1,0);
  \Raa{4,0}{.7}{0}{\draw (1.5,-.75) node[below] {$G^1$};}
  \draw[gray,thick,>=stealth,->] (6.5,0) --++ (1,0);
  \begin{scope}[scale=1,shift={(6.5,0)}]
    \Raa{1.5,0}{1}{30}{}
    \Rab{4.1,1.5}{1}{-30}{}
    \Rba{6.7,0}{1}{-150}{}
    \Rbb{4.1,-1.5}{1}{150}{}
    \fill[black] (1.5,0) circle (2.25pt);
    \fill[black] (6.7,0) circle (2.25pt);
    \draw (3.75,-1.5) node[below] {$G^2$};
  \end{scope}
  \draw[gray,thick,>=stealth,->] (14,0) --++ (1,0);
  \draw (15.75,0) node(){\large$\cdots$\!\!};
\end{tikzpicture}
\caption{A random colored substitution network}
\label{fig:rule-example}
\end{figure}
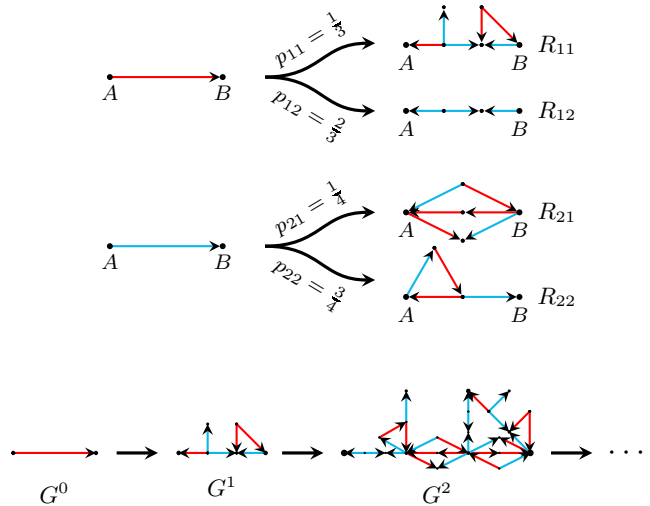

%%%%%%%%%%%%%%%%%%%%%%%%%%%%%%%%%%%%%%%%%%%%%%%%%%%%
\section{Defining the scale-free property and the degree dimension}
\label{sec:defining-scale-freeness-and-degree-dimension}
\bigskip

For any undirected graph $G = \big(E(G),V(G)\big)$, 
let $\Delta(G)$ be the maximal degree of any node $v$ of $G$
and define the {\em normalised degree} of any node $v\in V$ to be 
$\hat{\deg}_G(v) = \frac{\deg(v)}{\Delta(G)}$.
This definition is extended to directed networks $G$ by letting $\hat{\deg}_G(v)$ be given by the underlying undirected graph of~$G$; 
that is, the graph obtained from~$G$ by ignoring arc directions.

\begin{definition}
Consider a network sequence $(G^0,G^1,G^2,\ldots)\in\mathcal{G}$ that converges to a network limit $\Gamma$. 
A node $v\in V(\Gamma)$ is {\em stationary} if the limit 
$\displaystyle\lim_{t\to\infty}\hat{\deg}_{G^t}(v)$ exists,
in which case, denote it by $\hat{\deg}_\Gamma(v)$.
The network sequence $\Gamma$ is (almost) {\em stationary} 
if (almost every) node of $\Gamma$ are stationary.
We will only consider almost stationary network sequences $\Gamma\in\mathcal{G}$ in this paper.

Now for each positive real number $\ell$, define 
\begin{equation}\label{equ:Pldef}
  P_\ell (\Gamma)  = \bigl| \{v\in V(\Gamma) \::\: \hat{\deg}_\Gamma(v) = \ell\}\bigr|\,.
\end{equation}
\end{definition}

\begin{definition}{\label{def:Scale-free Property}
Define
\[
\overline{\dim}_D (\Gamma)=\underset{\ell \to 0}{\lim\sup}\, \frac{\log P_\ell(\Gamma)}{-\log \ell}\,
\]
where the limit is taken over all values of $\ell$ such that $P_\ell(\Gamma)>0$.
The graph $\Gamma$ is {\em scale-free} if and only if, 
taking the limit $\ell\to\,0$ for all $\ell$ such that $P_\ell(\Gamma)>0$, 
\begin{equation}\label{eq:scale-free 1}
\dim_D(\Gamma) = \lim_{\ell\to\,0} \frac{\log P_\ell(\Gamma)}{-\log \ell}
\end{equation}
exists and is positive, in which case the limit is called the {\em degree dimension} of~$\Gamma$.}
\end{definition}

\begin{remark}
\label{rem:definition-more-general}{\rm
It is interesting, and potentially useful, to note that this definition can be used to define the scale-free property of many other infinite networks besides those arising from random colored substitution networks.}
\end{remark}

\begin{lemma}
If $\Delta (\Gamma) < \infty$, then $\dim_D (\Gamma)$ does not exist.
If $|V(\Gamma)|<\infty$, then $\dim_D (\Gamma)=0$.
\end{lemma}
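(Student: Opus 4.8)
The plan is to show that in both cases the normalized degrees $\hat{d}_\Gamma(v)$ take only finitely many values, so that the quantity $P_\ell(\Gamma)$ of Definition~\ref{def:Scale-free Property} is supported on a set of $\ell$'s that is bounded away from $0$; the limit in~\eqref{eq:scale-free 1} then detects no power-law growth and collapses to $0$. First suppose $|V(\Gamma)|<\infty$. For every $\ell>0$ the set $\{v\in V(\Gamma):\hat{d}_\Gamma(v)=\ell\}$ is contained in $V(\Gamma)$, so $P_\ell(\Gamma)\le|V(\Gamma)|$, and $P_\ell(\Gamma)>0$ for at most $|V(\Gamma)|$ values of $\ell$. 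Thus $\log P_\ell(\Gamma)$ stays bounded (by $\log|V(\Gamma)|$) on its support while $-\log\ell\to+\infty$, and moreover $P_\ell(\Gamma)=0$ once $\ell$ is small enough; either way the ratio in~\eqref{eq:scale-free 1} tends to $0$, so $\dim_S(\Gamma)=0$.

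Next suppose $\Delta(\Gamma)<\infty$ and write $M:=\Delta(\Gamma)$. I would first note that arc substitution never decreases a node's degree --- when an arc incident to $v$ is replaced, the replacement network contributes at least one arc at $v$ in place of the one removed --- so $\deg_{G^t}(v)$ and $\Delta(G^t)$ are nondecreasing in $t$ and converge, respectively, to $\deg_\Gamma(v)$ and to $M$. Hence $\hat{d}_\Gamma(v)=\deg_\Gamma(v)/M$ lies in the finite set $\{0,\frac{1}{M},\frac{2}{M},\dots,1\}$; in particular every positive normalized degree is at least $\frac{1}{M}$, so $P_\ell(\Gamma)=0$ for all $\ell\in(0,\frac{1}{M})$. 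As in the first case, the limit in~\eqref{eq:scale-free 1} is then $0$, giving $\dim_S(\Gamma)=0$.

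The one delicate point --- and essentially the content the lemma is recording --- is the interpretation of the limit in~\eqref{eq:scale-free 1} when $\{\ell:P_\ell(\Gamma)>0\}$ fails to accumulate at $0$, which is exactly what happens in both cases (the set being finite). This is the degenerate situation that scale-freeness rules out, and $\dim_S(\Gamma)=0$ simply records it: a network with boundedly many nodes, or with bounded degree, has a normalized-degree spectrum too sparse near $0$ to support a positive degree dimension --- which is precisely why the conditions of Remark~\ref{rem:infinite} are imposed. Beyond that, nothing is hard: the estimates $P_\ell(\Gamma)\le|V(\Gamma)|$ and $\hat{d}_\Gamma(v)=\deg_\Gamma(v)/\Delta(\Gamma)\ge 1/\Delta(\Gamma)$ follow directly from the definitions together with the monotonicity of node degrees under substitution.
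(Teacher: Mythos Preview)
Your proof is correct and follows essentially the same approach as the paper's: bound $P_\ell(\Gamma)$ by $|V(\Gamma)|$ in the finite-vertex case, and observe that $P_\ell(\Gamma)=0$ for all $\ell<1/\Delta(\Gamma)$ in the bounded-degree case. Your monotonicity argument for degrees under substitution is more than the paper supplies (it simply asserts the lower bound on $\hat{d}_\Gamma(v)$ without justification), but it is a reasonable elaboration and does not alter the structure of the argument.
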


\begin{proof}
If $\Delta (\Gamma) < \infty$, 
then no node has normalised degree $\ell$ for any $\ell<\frac{1}{\Delta(\Gamma)}$.
Given that $P_\ell(\Gamma)$ has to be positive, such limit does not exist accordingly.
If $|V(\Gamma)|<\infty$, 
then $P_\ell(\Gamma)\leq |V(\Gamma)|<\infty$. 
It follows that $\displaystyle\lim_{\ell\to\,0} (\log P_\ell(\Gamma))/(-\log \ell) = 0$.
\end{proof}

Therefore, $\Gamma$ is scale-free only when $\Delta (\Gamma) = \infty$ and $|V(\Gamma)| = \infty$. 
By Remark~\ref{rem:infinite}, the substitution networks in this paper will all feature networks $\Gamma$ with infinitely many arcs and $\Delta(\Gamma)=\infty$.

\begin{notation}
Let $\sim$ be defined as asymptotic equivalence.
Write $f(x) \overset{x \to x_0}{\asymp} g(x)$ whenever
$\displaystyle\lim_{x \to x_0} f(x)/g(x) = c$ for some constant $c>0$.
\end{notation}

\begin{lemma}
\label{lem:Sufficient Condition for Scale-free Property}
$\Gamma$ is scale-free if, taking the limit $\ell\to 0$ for all $\ell>0$ such that $P_\ell(\Gamma)>0$,
\begin{equation}\label{eq:scale-free 2}
  P_\ell(\Gamma)
    \overset{\ell\to\,0}{\asymp}\ell^{-\dim_D (\Gamma)}\,.
\end{equation}
\end{lemma}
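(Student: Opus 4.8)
The plan is to derive \eqref{eq:scale-free 1} directly from the hypothesis \eqref{eq:scale-free 2} by a short computation with logarithms. Write $\delta$ for the exponent appearing in the hypothesis, so that $P_\ell(\Gamma) \overset{\ell\to 0}{\asymp} \ell^{-\delta}$ means $\lim_{\ell\to 0} P_\ell(\Gamma)\,\ell^{\delta} = c$ for some constant $c > 0$, the limit ranging over those $\ell > 0$ with $P_\ell(\Gamma) > 0$. The goal is to show that the limit defining $\dim_S(\Gamma)$ exists and equals $\delta$; since Definition~\ref{def:Scale-free Property} additionally requires this value to be positive for $\Gamma$ to be called scale-free, we are implicitly in the case $\delta > 0$, which is exactly what the notation $\ell^{-\dim_S(\Gamma)}$ in the statement is meant to convey.

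First I would observe that for every $\ell > 0$ with $P_\ell(\Gamma) > 0$ the quantity $\log P_\ell(\Gamma)$ is well defined (indeed nonnegative, since $P_\ell(\Gamma)$ is then a positive integer by \eqref{equ:Pldef}), and that
\[
  \frac{\log P_\ell(\Gamma)}{-\log \ell}
    = \frac{\log\!\big(P_\ell(\Gamma)\,\ell^{\delta}\big)}{-\log \ell} + \delta\,,
\]
which is just the identity $\log P_\ell(\Gamma) = \log\!\big(P_\ell(\Gamma)\,\ell^{\delta}\big) - \delta\log\ell$ divided through by $-\log\ell$.

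Then I would let $\ell \to 0$ term by term. By hypothesis $P_\ell(\Gamma)\,\ell^{\delta} \to c$, so $\log\!\big(P_\ell(\Gamma)\,\ell^{\delta}\big) \to \log c$, a finite constant, whereas $-\log\ell \to +\infty$; hence the first summand on the right-hand side tends to $0$ and the left-hand side tends to $\delta$. Thus the limit defining $\dim_S(\Gamma)$ in \eqref{eq:scale-free 1} exists and equals $\delta > 0$, so $\Gamma$ is scale-free with degree dimension $\delta$, as claimed.

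The argument involves no real obstacle; the only point deserving care is that the limits in \eqref{eq:scale-free 1} and \eqref{eq:scale-free 2} are both taken over the set $\{\ell > 0 : P_\ell(\Gamma) > 0\}$, and one should check that this set accumulates at $0$, since otherwise the statement would be vacuous. This is automatic from the hypothesis: if $P_\ell(\Gamma)\,\ell^{\delta} \to c > 0$, then $P_\ell(\Gamma)$ is positive for all sufficiently small admissible $\ell$, so $0$ is a limit point of this set and both limits are meaningful.
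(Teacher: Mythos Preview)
Your proof is correct. The paper itself states this lemma without proof, treating it as an immediate consequence of Definition~\ref{def:Scale-free Property} and the definition of~$\asymp$; your argument is precisely the natural way to fill in the details---take logarithms, isolate the $\delta$ term via the identity
\[
  \frac{\log P_\ell(\Gamma)}{-\log\ell} = \frac{\log\bigl(P_\ell(\Gamma)\,\ell^{\delta}\bigr)}{-\log\ell} + \delta\,,
\]
and observe that the first summand vanishes in the limit because its numerator tends to the finite constant $\log c$ while the denominator diverges. Your observation that positivity of $\delta$ is needed for the conclusion ``$\Gamma$ is scale-free'' (and is implicit in the notation $\ell^{-\dim_S(\Gamma)}$) is also apt, as is your remark that the hypothesis forces $0$ to be an accumulation point of $\{\ell>0 : P_\ell(\Gamma)>0\}$, so that both limits are genuinely meaningful.
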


\begin{proof}
(\ref{eq:scale-free 2}) implies $\displaystyle\dim_D(\Gamma)
=\lim_{\ell \to 0}\frac{\log c+ \log P_{\ell}(\Gamma)}{\log \ell}
=\lim_{\ell \to 0}\frac{\log P_{\ell}(\Gamma)}{\log \ell}$.
\end{proof}

\smallskip

\begin{notation}
For all $t$, define 
\[
  P_L(G^t) = \bigl| \{v\in V(G^t) \::\: \deg_{G^t}(v) = L\}\bigr|\,.
\]
\end{notation}

\smallskip

\begin{theorem}\label{thm:Sufficient Condition for Scale-free Property 2}
If $|V(G^t)|\overset{t \to \infty}{\asymp} \Delta(G^t)^{\delta}$,
then $\Gamma$ is scale-free with degree dimension $\delta=\dim_D(\Gamma)$ if, 
for all functions $L:\mathbb{N}\to\mathbb{N}$ satisfying 
$L(t)/\Delta (G^t) \overset{t \to \infty}{\sim}0$,
\begin{equation}\label{eq:scale-free 3}
  \frac{P_{L(t)}(G^t)}{\bigl|V(G^t)\bigr|} \overset{t \to \infty}{\asymp} L(t)^{\delta}\,.
\end{equation}
\end{theorem}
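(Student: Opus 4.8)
The plan is to reduce the statement to Lemma~\ref{lem:Sufficient Condition for Scale-free Property}: it is enough to show that $P_\ell(\Gamma)\overset{\ell\to 0}{\asymp}\ell^{-\delta}$, i.e.\ that $P_{\ell_m}(\Gamma)\,\ell_m^{\,\delta}$ converges to one and the same positive constant along every sequence $\ell_m\to 0$ with $P_{\ell_m}(\Gamma)>0$. The bridge from the limit network $\Gamma$ to the finite stages $G^t$ is the approximation $P_\ell(\Gamma)=\lim_{t\to\infty}P_{L_\ell(t)}(G^t)$ for a suitable integer sequence $L_\ell$ with $L_\ell(t)/\Delta(G^t)\to\ell$, which I would establish first for each $\ell>0$ with $P_\ell(\Gamma)>0$. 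Working with the counts $N_\ell^{(\eta)}(H)=|\{v\in V(H):|\hat{d}_H(v)-\ell|<\eta\}|$, for which $P_\ell(\Gamma)=\lim_{\eta\downarrow0}N_\ell^{(\eta)}(\Gamma)$, the heart of the matter is the interchange $N_\ell^{(\eta)}(\Gamma)=\lim_{t\to\infty}N_\ell^{(\eta)}(G^t)$ for generic $\eta$. This I would derive from two facts about the networks in $\mathcal{G}$ (to be verified separately): node degrees are non-decreasing under substitution, so a node's normalised degree can only enter the band $(\ell-\eta,\ell+\eta)$ through the growth of $\Delta(G^t)$; and a node created at a late stage has vanishingly small normalised degree at every later stage, so for fixed $\ell>0$ only finitely many nodes are ever relevant. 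Together these let the limit over $t$ be taken node by node; decomposing $N_\ell^{(\eta)}(G^t)$ as $\sum_L P_L(G^t)$ over integers $L$ with $|L/\Delta(G^t)-\ell|<\eta$ and letting $\eta\downarrow0$ along a suitable sequence then yields the approximation, with $L_\ell(t)$ a central value of the band.

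Since hypothesis~(\ref{eq:scale-free 3}) constrains only functions $L$ with $L(t)/\Delta(G^t)\to 0$, it cannot be applied to $L_\ell$ itself, whose ratio tends to $\ell>0$; this forces a diagonal construction. Given $\ell_m\to0$, I would choose integers $t_1<t_2<\cdots$ increasing so rapidly that $\bigl|P_{L_{\ell_m}(t)}(G^t)-P_{\ell_m}(\Gamma)\bigr|<1/m$ and $\bigl|L_{\ell_m}(t)/\Delta(G^t)-\ell_m\bigr|<1/m$ hold for all $m$ and all $t\ge t_m$, and then splice the functions together by setting $L^{*}(t):=L_{\ell_m}(t)$ for $t_m\le t<t_{m+1}$. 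On the $m$-th block $L^{*}(t)/\Delta(G^t)\in(\ell_m-1/m,\ell_m+1/m)$, so $L^{*}(t)/\Delta(G^t)\to0$ as $t\to\infty$ and $L^{*}$ is admissible in~(\ref{eq:scale-free 3}).

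Applying~(\ref{eq:scale-free 3}) to $L^{*}$ and combining it with $|V(G^t)|\overset{t\to\infty}{\asymp}\Delta(G^t)^{\delta}$ shows that $P_{L^{*}(t)}(G^t)$ is asymptotically a fixed positive constant $c$ times $\bigl(L^{*}(t)/\Delta(G^t)\bigr)^{-\delta}$. Reading this off along the endpoints $t=t_m$, where $L^{*}(t_m)/\Delta(G^{t_m})=\ell_m\bigl(1+O(1/m)\bigr)$, gives $P_{L^{*}(t_m)}(G^{t_m})\,\ell_m^{\,\delta}\to c$; and since $\bigl|P_{\ell_m}(\Gamma)-P_{L^{*}(t_m)}(G^{t_m})\bigr|<1/m$ while $\ell_m^{\,\delta}$ stays bounded, it follows that $P_{\ell_m}(\Gamma)\,\ell_m^{\,\delta}\to c$ as well. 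As the sequence $(\ell_m)$ was arbitrary, $P_\ell(\Gamma)\overset{\ell\to0}{\asymp}\ell^{-\delta}$, so Lemma~\ref{lem:Sufficient Condition for Scale-free Property} applies and $\Gamma$ is scale-free with $\dim_S(\Gamma)=\delta$.

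I expect the bridge of the first paragraph to be the main obstacle: pinning down the correspondence between a level $\ell$ of the infinite network and a concrete sequence of finite-stage degrees, while controlling the non-integrality of $\ell\,\Delta(G^t)$ and possible accidental coincidences of node degrees at a single stage, and retaining enough uniformity for the diagonal splice to be legitimate. A lesser caveat concerns the constant $c$: strictly speaking one wants the constant implicit in~(\ref{eq:scale-free 3}) to be independent of the admissible function $L$, but even if it only stays within fixed positive bounds the argument still delivers the two-sided estimate $P_\ell(\Gamma)=\Theta(\ell^{-\delta})$, which by Definition~\ref{def:Scale-free Property} already forces $\dim_S(\Gamma)=\delta$.
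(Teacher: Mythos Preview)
Your approach matches the paper's: rewrite $P_{L(t)}(G^t)$ in terms of $\ell(t)=L(t)/\Delta(G^t)$, use the hypothesis $|V(G^t)|\asymp\Delta(G^t)^{\delta}$ to clear the denominator and obtain $P_{\ell(t)}(G^t)\asymp\ell(t)^{-\delta}$, then pass to $P_\ell(\Gamma)\asymp\ell^{-\delta}$ and invoke Lemma~\ref{lem:Sufficient Condition for Scale-free Property}. The paper compresses your bridge and diagonal splice into the single remark that ``a sequence converges if and only if every subsequence of it converges,'' so your $N_\ell^{(\eta)}$-band argument and the diagonal construction supply rigor at exactly the point where the paper's proof is terse.
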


\begin{proof}
Suppose that Condition~(\ref{eq:scale-free 3}) holds.
Let $L(t)$ be any function on $\mathbb{N}$ 
satisfying $L(t)/\Delta (G^t) \overset{t \to \infty}{\sim}0$
and define the function $\ell(t)$ by $\ell(t)=L(t)/\Delta(G^t)$.
Then 
\[
  \frac{P_{\ell(t)}(G^t)}{\bigl|V(G^t)\bigr|} \overset{t \to \infty}{\asymp} \ell(t)^{-\delta} \Delta(G^t)^{-\delta}\,.
\]
Now $\Delta(G^t)^{\delta} \overset{t \to \infty}{\asymp} |V(G^t)|$, 
so  $P_{\ell(t)}(G^t) \overset{t \to \infty}{\asymp} \ell(t)^{-\delta}$.
Note that a sequence converges if and only if every subsequence of it converges. 
Hence, if all possible sequences $L(t)$ satisfy $
L(t)/\Delta(G)\overset{t \to \infty}{\to} 0$ and Condition~(\ref{eq:scale-free 3}), 
then Condition~(\ref{eq:scale-free 2}) will hold.
Therefore, 
\[
  P_{\ell}(\Gamma) \overset{\ell \to 0}{\asymp} \ell^{-\delta}\,,
\]
and so $\log P_{\ell}(\Gamma) \overset{\ell \to 0}{\sim} (-\log \ell) \delta$\,.
Therefore, $\Gamma$ is scale-free with $\dim_D (\Gamma)=\delta$.
\end{proof}

\begin{proposition}
Let $G_1$ and $G_2$ be two subnetworks of $G$.
Then
\[
    \overline{\dim}_D(G_1\cup G_2)
  = \max\bigl\{\overline{\dim}_D(G_1),\overline{\dim}_D(G_2)\bigr\} \,.
\]
\end{proposition}

\begin{proof}
For any $\ell$, 
\[
  2\max\{ P_\ell(G_1),P_\ell(G_2) \} \geq P_\ell(G_1 \cup G_2) \geq \max\{ P_\ell(G_1),P_\ell(G_2) \}\,.
\]
As a result, 
\begin{align*}
       \underset{\ell\to\,0}{\overline\lim}\,\frac{\log  \max\{P_\ell(G_1),P_\ell(G_2)\}}{-\log\ell} 
 &\leq \overline{\dim}_D(G_1\cup G_2)\\
 &\leq \underset{\ell\to\,0}{\overline\lim}\,\frac{\log 2\max\{P_\ell(G_1),P_\ell(G_2)\}}{-\log\ell}\,,
\end{align*}
while both sides converge to $\max\bigl\{\overline{\dim}_D(G_1),\overline{\dim}_D(G_2)\bigr\}$.

By induction, this property holds for finite unions as well.
\end{proof}

%%%%%%%%%%%%%%%%%%%%%%%%%%%%%%%%%%%%%%%%%%%
\section{Stochastic substitution processes}
\label{sec:stochastic substitution process}

\noindent
This section introduces a mathematical framework, called a {\em stochastic substitution process}. 
This framework provides the results that are applied in 
Sections~\ref{sec:Cardinality of degree} and~\ref{sec:proof of random}
to study the asymptotic properties of random colored substitution networks regarding Lyapunov exponents.

\medskip

\begin{notation}
For any vector $\mathbf{x}$, let $[\mathbf{x}]_i$ be the $i$-th entry of $\mathbf{x}$. 
For each network $G$ with arcs colored in colors $1,\ldots,\lambda$, 
define $\mathbf{\boldsymbol{\chi}}(G)$ to be the vector whose $j$-th entry is 
the number of $j$-colored arcs in~$G$.
Let $\Vert\mathbf{x}\Vert_1$ denote the sum of entries in~$\mathbf{x}$. 
For any vectors $\mathbf{x}$ and $\mathbf{y}$ of equal dimension,
write $\mathbf{x}\geq\mathbf{y}$ if $[\mathbf{x}]_i\geq[\mathbf{y}]_i$ for all~$i$. 
For any real square matrix $\mathbf{X}$, 
let $\rho(\mathbf{X})$ denote the {\em spectral radius} of~$\mathbf{X}$.

\medskip

Let $\mathcal{X} = \mathbf{X}_1,\dots,\mathbf{X}_N$ be a set of finitely many non-negative square matrices.
The set $\mathcal{X}$,
together with a probability vector $(p_1,\dots,p_N)$ where 
$\Pr(\mathbf{X}_i)=p_i$ for $i=1,\ldots,N$,
is called a {\em random matrices set}.
The notation $\Pr_\mathcal{X}(\mathbf{X}_i)$ 
will be used to denote $\Pr(\mathbf{X}_i)$, 
to highlight that these probabilities are associated with $\mathcal{X}$.
\end{notation}

Write $\mathcal{L}(\mathcal{X})$ as the maximal Lyapunov exponent of $\mathcal{X}$ defined by
\[
     \mathcal{L}(\mathcal{X})
  := \lim_{n \to \infty} \frac{1}{n} \mathbb{E} 
      \bigl(\log \Vert \mathbf{Y}_{i_1} \cdots \mathbf{Y}_{i_n} \Vert\bigr) \,,
\]
where $\mathbf{Y}_{i_k} \in \mathcal{X}$ is chosen with probability $\Pr_\mathcal{X}(\mathbf{Y}_{i_k})$, 
and where $\mathbb{E}(*)$ is the expectation value.
The study of asymptotic behaviours of random matrices product dates back to 
Bellman~\cite{Bellman54}, Furstenberg and Kesten~\cites{Furstenberg60,Furstenberg63}, 
Guivarc~\cite{Guivarc85} and Le Page~\cite{Page82}.
A famous theorem by Furstenberg and Kesten~\cite{Furstenberg60} 
asserts that $\mathcal{L}$ exists and that
\[
  \mathcal{L}(\mathcal{X})
  \overset{a.s.}{=\joinrel=}\lim_{n \to \infty} 
  \frac{1}{n} \log\Vert\mathbf{Y}_{i_1} \cdots \mathbf{Y}_{i_n} \Vert 
  \overset{a.s.}{=\joinrel=}\lim_{n \to \infty} 
  \frac{1}{n} \log[\mathbf{Y}_{i_1} \cdots \mathbf{Y}_{i_n} ]_{jk}
\]
if all $\mathbf{X}\in\mathcal{X}$ are primitive.

\begin{definition}
Let $\mathcal{X}=\{\mathbf{X}_1,\dots,\mathbf{X}_N\}$ be a random matrices set.
For each $i=1,\ldots,m$, 
let $\mathbf{e}_{i}\in\mathbb{R}^m$ be the $i$-th standard basis unit vector of~$\mathbb{R}^m$.
Define a random function 
$T_{\mathcal{X}}^{'} \::\: \{ \mathbf{e}_{1}, \dots, \mathbf{e}_{m}\} \to (\mathbb{Z}^+)^m$
by setting $m$ identical and independent random vectors $T_{\mathcal{X}}^{'}(\mathbf{e}_i)$,
each with probability
\[
  \Pr(T^{'}_{\mathcal{X}}(\mathbf{e}_{i}) = \mathbf{e}_{i} \mathbf{X}_j) = p_j \,.
\]

We decompose all 
$\mathbf{x} = x_1\mathbf{e}_1+\cdots+x_m\mathbf{e}_m \in(\mathbb{Z}^+)^m$ 
($x_i \in \mathbb{Z}$) 
through the random function $T_{\mathcal{X}} \::\: (\mathbb{Z}^+)^m  \to (\mathbb{Z}^+)^m$ by
\[
    T_{\mathcal{X}}(\mathbf{x}) 
  = \sum_{j=1}^\lambda \sum_{i=1}^{x_j} T^{'}_{\mathcal{X}}(\mathbf{e}_j)\,.
\]
For simplicity, write 
$T_{\mathcal{X}}^n = \overbrace{T_{\mathcal{X}} \circ \cdots \circ T_{\mathcal{X}}}^{n}$.
We call such $T_{\mathcal{X}}^n(\mathbf{x})$ a {\em stochastic substitution process}. 
That is because the decomposition of $\mathbf{x}$ represents the independent substitution of each arc, 
and $T^{'}_{\mathcal{X}}$ indicates the result of substitution.
\end{definition}

\begin{theorem}\label{thm:stochastic substitution process}
\[
\lim_{n\to \infty} \frac{1}{n} \log \Vert T_{\mathcal{X}}^n(\mathbf{x}_0) \Vert =\mathcal{L}(\mathcal{X}) \quad a.s.
\]
\end{theorem}
\begin{proof}
Note that $\mathbf{Y}_{i_1},\ldots,\mathbf{Y}_{i_n}$ forms a stationary stochastic process.
By Furstenberg and Kesten's theorem~\cite{Furstenberg60},
$\displaystyle\lim_{n\to \infty} \frac{1}{n} \log \mathbb{E}\bigl(\Vert T_{\mathcal{X}}^n(\mathbf{x}_0)\bigr) \Vert \bigr)$
exists.
Hence, for any~$\mathbf{x}_0$,
\begin{align*}
\lim_{n\to \infty} \frac{1}{n} \log \Vert T_{\mathcal{X}}^n(\mathbf{x}_0) \Vert 
&=\lim_{n\to \infty} \frac{1}{n} \log \mathbb{E}\bigl(\Vert T_{\mathcal{X}}^n(\mathbf{x}_0)\bigr) \Vert \bigr) \\
&=\lim_{n \to \infty} \frac{1}{n} \mathbb{E} \bigl(\log \Vert \mathbf{x}_0 \mathbf{Y}_{i_1} \cdots \mathbf{Y}_{i_n} \Vert\bigr) \\
&=\lim_{n \to \infty} \frac{1}{n} \log \Vert \mathbf{Y}_{i_1} \cdots \mathbf{Y}_{i_n} \Vert 
\overset{a.s.}{=\joinrel=}\mathcal{L}(\mathcal{X}) \qedhere
\end{align*}
\end{proof}
This theorem reveals that the growth rate of stochastic substitution process almost surely follows the Lyapunov exponent of the random matrices product.

\smallskip

\begin{lemma}\label{lem:Norm Limit}
Let $\mathbf{X}$ be a primitive non-negative $n \times n$ matrix with spectral radius $\rho(\mathbf{X})$.
Then, for any positive vector $\mathbf{u}\in(\mathbb{R}^+)^n$,
\[
  \Vert \mathbf{u}\mathbf{X}^t\Vert_1\overset{t\to \infty}{\asymp} \rho(\mathbf{X})^t\,.
\]
\end{lemma}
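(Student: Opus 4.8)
The plan is to upgrade the two-sided estimate of Lemma~\ref{lem:Norm Range} to an honest limit by invoking the convergence of the normalised powers of a primitive matrix. Put $\mathbf{X}' = \rho(\mathbf{X})^{-1}\mathbf{X}$, a primitive matrix with spectral radius $1$. Let $\mathbf{v}$ be a positive left Perron eigenvector of $\mathbf{X}$ (a row vector, as in the proof of Lemma~\ref{lem:Norm Range}) and $\mathbf{w}$ a positive right Perron eigenvector (a column vector), scaled so that $\mathbf{v}\mathbf{w}=1$. By the Perron--Frobenius theorem for primitive matrices, $\rho(\mathbf{X})$ is a simple eigenvalue of $\mathbf{X}$ strictly dominating every other eigenvalue in modulus, and consequently the powers $(\mathbf{X}')^t$ converge as $t\to\infty$ to the Perron projection $\mathbf{L}=\mathbf{w}\mathbf{v}$, a strictly positive rank-one matrix (strict positivity because both $\mathbf{v}$ and $\mathbf{w}$ are strictly positive).

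First I would apply this to the positive row vector $\mathbf{u}$: since the map $\mathbf{A}\mapsto\mathbf{u}\mathbf{A}$ is linear, hence continuous,
\[
  \frac{\mathbf{u}\mathbf{X}^t}{\rho(\mathbf{X})^t}=\mathbf{u}(\mathbf{X}')^t\;\overset{t\to\infty}{\longrightarrow}\;\mathbf{u}\mathbf{L}=(\mathbf{u}\mathbf{w})\,\mathbf{v}\,,
\]
where the scalar $\mathbf{u}\mathbf{w}$ is strictly positive because $\mathbf{u}$ and $\mathbf{w}$ are both strictly positive, so $(\mathbf{u}\mathbf{w})\mathbf{v}$ is a strictly positive row vector. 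Taking $\ell_1$-norms and using continuity of $\Vert\cdot\Vert_1$ then gives
\[
  \frac{\Vert\mathbf{u}\mathbf{X}^t\Vert_1}{\rho(\mathbf{X})^t}=\bigl\Vert\mathbf{u}(\mathbf{X}')^t\bigr\Vert_1\;\overset{t\to\infty}{\longrightarrow}\;(\mathbf{u}\mathbf{w})\,\Vert\mathbf{v}\Vert_1\,,
\]
and the limiting constant $(\mathbf{u}\mathbf{w})\Vert\mathbf{v}\Vert_1$ is strictly positive. This is exactly the assertion $\Vert\mathbf{u}\mathbf{X}^t\Vert_1\overset{t\to\infty}{\asymp}\rho(\mathbf{X})^t$.

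The only content beyond Lemma~\ref{lem:Norm Range} is the convergence $(\mathbf{X}')^t\to\mathbf{w}\mathbf{v}$, which I would either cite as the standard fact that the spectral-radius-normalised powers of a primitive matrix converge to its (rank-one, strictly positive) Perron projection, or read off the Jordan form of $\mathbf{X}'$: the eigenvalue $1$ sits in a single $1\times1$ block, while every other Jordan block has spectral radius below $1$ and so its powers vanish. I do not expect a genuine obstacle, but the one point to handle with care is why the limiting constant is strictly positive rather than merely non-negative; this is where primitivity of $\mathbf{X}$ is used (it is what makes the powers converge at all, rather than cycle, as they could for a periodic irreducible matrix), together with the strict positivity of $\mathbf{u}$, which guarantees $\mathbf{u}\mathbf{w}>0$.
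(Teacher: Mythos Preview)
Your proof is correct and follows essentially the same route as the paper: both normalise by $\rho(\mathbf{X})^t$, invoke the Perron--Frobenius convergence $(\mathbf{X}/\rho(\mathbf{X}))^t\to\mathbf{G}$ to a positive limit matrix, and read off $\Vert\mathbf{u}\mathbf{X}^t\Vert_1/\rho(\mathbf{X})^t\to\Vert\mathbf{u}\mathbf{G}\Vert_1>0$. You are simply more explicit than the paper in identifying $\mathbf{G}$ as the rank-one Perron projection $\mathbf{w}\mathbf{v}$ and in justifying strict positivity of the limiting constant.
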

\begin{proof}
As $\mathbf{X}$ is primitive, 
the Perron-Frobenius Theorem implies that the following limit matrix exists and is positive:
$\displaystyle\lim_{t \to \infty} \Bigl(\frac{\mathbf{X}}{\rho(\mathbf{X})}\Bigr)^t = \mathbf{G}$.
Hence,
\[  
  \bigg\Vert  \mathbf{u} \lim_{t \to \infty} \biggl(\frac{\mathbf{X}}{\rho(\mathbf{X})}\biggr)^t \bigg\Vert_1 
= \Vert  \mathbf{u} \mathbf{G} \Vert_1
= c
\]
where $c>0$ is a constant depending on $\mathbf{u}$ and $\mathbf{X}$.
Thus,
$\Vert\mathbf{u}\mathbf{X}^t\Vert_1\overset{t\to \infty}{\asymp} \rho(\mathbf{X})^t$.
\end{proof}

\begin{lemma}\label{lem:random to deterministic}
Let $\mathcal{X}$ be a random matrices set.
If $\mathcal{X}=\{\mathbf{X}\}$, 
then $\mathcal{L}(\mathcal{X})=\log\rho(\mathbf{X})$.
\end{lemma}
\begin{proof}
By definition and Lemma~\ref{lem:Norm Limit}
\[
\mathcal{L}(\mathcal{X})
=\lim_{n \to \infty} \frac{1}{n} \log \Vert \mathbf{X}^n\Vert 
=\lim_{n \to \infty} \frac{1}{n} \log c\rho(\mathbf{X})^n
=\log \rho(\mathbf{X})\,. \qedhere
\]
\end{proof}

%%%%%%%%%%%%%%%%%%%%%%%%%%%%%%%%%%%%%%
\section{Main results}
\label{sec:main}

The main result of this paper is Theorem~\ref{thm:random scale-free} which states that 
random colored substitution networks are scale-free under certain natural conditions.

\begin{definition}\label{def:arc matrix}
For each $i_j \in \{1,\ldots,q_j\}$, define 
\[
\mathbf{M}=
\begin{pmatrix}
\boldsymbol{\chi}(R_{1i_1}) \\
\vdots \\
\boldsymbol{\chi}(R_{\lambda i_\lambda})
\end{pmatrix}
\,.
\]
Then collect all $\mathbf{M}$ to obtain
\[
\mathcal{M}=\bigl\{\mathbf{M}\::\: i_j \in \{1,\ldots,q_j\},j\in\{ 1,\ldots,\lambda\} \bigr\} \,.
\]
Note that $\mathcal{M}$ has $\prod_{j=1}^\lambda q_j$ elements 
and  that $\Pr_\mathcal{M}(\mathbf{M})=\prod_{j=1}^\lambda p_{ji_j}$.
Note also that $\mathcal{M}$ is a random matrices set
since $\sum_{\mathbf{M} \in \mathcal{M}} \Pr_\mathcal{M}(\mathbf{M})=1$.
In this paper, we assume that all matrices in $\mathcal{M}$ are primitive.
\end{definition}
\begin{notation}
For a node $v$ of an arc-colored directed network $G$,
let $\deg_j^+(G:v)$ and $\deg_j^-(G:v)$ denote
the number of $j$-colored out-going arcs $(v,w)$
and the number of $j$-colored in-going arcs $(u,v)$ in~$G$, respectively.
Let 
\[
\boldsymbol{\delta}(G:v):=\bigl(\deg_1^+ (G:v), \deg_1^- (G:v),\ldots,\deg_\lambda^+ (G:v), \deg_\lambda^- (G:v)\bigr) 
\]
be a $2\lambda$-dimensional non-negative vector.
\end{notation}

\begin{definition}\label{def:degree matrix}
Define the $2\lambda \times 2\lambda$ matrix
\[
  \mathbf{N} = 
\begin{pmatrix}
\boldsymbol{\delta}(R_{1i_1}:A) \\
\boldsymbol{\delta}(R_{1i_1}:B) \\
\vdots \\
\boldsymbol{\delta}(R_{\lambda i_\lambda}:A) \\
\boldsymbol{\delta}(R_{\lambda i_\lambda}:B)
\end{pmatrix} \,,
\]
where $i_j\in \{1,\ldots,q_j\}$.

Let $\mathcal{N}$ be the set of these random matrices
\[
\mathcal{N}=\bigl\{\mathbf{N} \::\: i_j \in \{1,\ldots,q_j\},j\in\{1,\ldots,\lambda\} \bigr\}
\]
and note that $|\mathcal{N}|=\prod_{j=1}^\lambda q_j$. 
To assign probability for each matrix, let $\Pr_\mathcal{N}(\mathbf{N})=\prod_{j=1}^\lambda p_{j i_j}$.
In this way, $\mathcal{N}$ is a random matrices set.
Moreover, in this paper, all matrices in $\mathcal{N}$ are assumed to be primitive.
\end{definition}

\begin{example}\label{exa:M-and-N}
For the random colored substitution network of Fig.~\ref{fig:rule-example}, 
\[
\mathcal{M}=\biggl\{
\begin{pmatrix}
    2 & 2 \\
    2 & 5 
\end{pmatrix}
,
\begin{pmatrix}
    2 & 2 \\
    5 & 2
\end{pmatrix}
,
\begin{pmatrix}
    2 & 2 \\
    2 & 5
\end{pmatrix}
,
\begin{pmatrix}
    2 & 2 \\
    5 & 2
\end{pmatrix}
\biggr\}
\]
and 
\[
\mathcal{N}=\left\{
\begin{pmatrix}
    1 & 0 & 0 & 1 \\
    0 & 1 & 1 & 0 \\
    1 & 0 & 0 & 0 \\
    1 & 1 & 1 & 2
\end{pmatrix}
,
\begin{pmatrix}
    1 & 0 & 0 & 1 \\
    0 & 1 & 1 & 0 \\
    1 & 1 & 2 & 1 \\
    0 & 1 & 0 & 0
\end{pmatrix}
,
\begin{pmatrix}
    0 & 1 & 1 & 0 \\
    1 & 0 & 0 & 1 \\
    1 & 0 & 0 & 0 \\
    1 & 1 & 1 & 2
\end{pmatrix}
,
\begin{pmatrix}
    0 & 1 & 1 & 0 \\
    1 & 0 & 0 & 1 \\
    1 & 1 & 2 & 1 \\
    0 & 1 & 0 & 0
\end{pmatrix}
\right\}\,.
\]
Both $\mathcal{M}$ and $\mathcal{N}$ have associated probability vectors $(\frac{1}{12},\frac{1}{4},\frac{1}{6},\frac{1}{2})$.
\end{example}

The main result of the paper is as follows.
\begin{theorem}\label{thm:random scale-free}
For a random colored substitution network, 
almost every $\Gamma \in \mathcal{G}$ is stationary and scale-free with associated degree dimension
\[
\dim_D (\Gamma)
\overset{a.s}{=\joinrel=}\dfrac{\mathcal{L}(\mathcal{M})}{\mathcal{L}(\mathcal{N})} \,.
\]
\end{theorem}

Theorem~\ref{thm:random scale-free} will be proved in Section~\ref{sec:proof of random}.
\begin{corollary}\label{coro:deterministic scale-free}
For a {\em deterministic} colored substitution network,  
$\Gamma$ is stationary and scale-free with associated degree dimension
\[
\dim_D (\Gamma)
=\dfrac{\log\rho(\mathbf{M})}{\log\rho(\mathbf{N})}\,.
\]
\end{corollary}

\bigskip

Here we present a powerful application of random colored substitution networks to the analysis of the degree dimension.
\begin{proposition}
There is no fixed inequality between $\dim_D (G)$ and $\dim_D (H)$
that holds for $G \subset H$.
\end{proposition}

\begin{proof}

\begin{figure}
\centering
\begin{minipage}{0.3\textwidth}
\begin{tikzpicture}[scale=.55]
  \redarc{0,0}{2,0}
  \draw (0,0) node[below] {\footnotesize $A$};
  \draw (2,0) node[below] {\footnotesize $B$};
  \draw[gray,thick,>=stealth,->] (2.5,0) --++ (1,0);
  \draw (4,0) node[below] {\footnotesize $A$};
  \redarc{4,0}{5,0}
  \redarc{6,0}{5,0}
  \redarc{5,1}{4,0}
  \redarc{5,1}{5,0}
  \redarc{5,1}{6,0}
  \draw  (6,0) node[below] {\footnotesize $B$};
\end{tikzpicture}
Substitution network 1
\end{minipage}
\hfill
\begin{minipage}{0.3\textwidth}
\begin{tikzpicture}[scale=.55]
  \redarc{0,0}{2,0}
  \draw (0,0) node[below] {\footnotesize $A$};
  \draw (2,0) node[below] {\footnotesize $B$};
  \draw[gray,thick,>=stealth,->] (2.5,0) --++ (1,0);
  \draw (4,0) node[below] {\footnotesize $A$};
  \redarc{4,0}{5,0}
  \redarc{6,0}{5,0}
  \redarc{5,1}{4,0}
  \redarc{5,1}{6,0}
  \draw  (6,0) node[below] {\footnotesize $B$};
\end{tikzpicture}
Substitution network 2
\end{minipage}
\hfill
\begin{minipage}{0.3\textwidth}
\begin{tikzpicture}[scale=.55]
  \redarc{0,0}{2,0}
  \draw (0,0) node[below] {\footnotesize $A$};
  \draw (2,0) node[below] {\footnotesize $B$};
  \draw[gray,thick,>=stealth,->] (2.5,0) --++ (1,0);
  \draw (4,0) node[below] {\footnotesize $A$};
  \redarc{4,0}{5,0}
  \redarc{6,0}{5,0}
  \redarc{5,1}{4,0}
  \redarc{5,1}{5,0}
  \draw  (6,0) node[below] {\footnotesize $B$};
\end{tikzpicture}
Substitution network 3
\end{minipage}
\caption{Three examples of substitution networks}
\label{figure:three networks}
\end{figure}

Consider three substitution networks as shown in Fig.~\ref{figure:three networks}, 
where 
\[
\mathbf{M}_1=5\,,\;
\mathbf{M}_2=4\,,\;
\mathbf{M}_3=4\,;\;
\quad
\mathbf{N}_1=
\begin{pmatrix}
1 & 1 \\
1 & 1
\end{pmatrix}
,\;
\mathbf{N}_2=
\begin{pmatrix}
1 & 1 \\
1 & 1
\end{pmatrix}
,\;
\mathbf{N}_3=
\begin{pmatrix}
1 & 1 \\
1 & 0
\end{pmatrix}.
\]
Their degree dimensions are as follows, by Theorem~\ref{coro:deterministic scale-free}:
\begin{align*}
&\dim_D(\Gamma_1)
=\frac{\log \rho(\mathbf{M}_1)}{\log \rho(\mathbf{N}_1)}
=\frac{\log 5}{\log 2}
\approx 2.3219 \,,\\
&\dim_D(\Gamma_2)
=\frac{\log \rho(\mathbf{M}_2)}{\log \rho(\mathbf{N}_2)}
=\frac{\log 4}{\log 2}
=2 \,, \\
&\dim_D(\Gamma_3)
=\frac{\log \rho(\mathbf{M}_3)}{\log \rho(\mathbf{N}_3)}
=\frac{\log4}{\log (\frac{1}{2}\sqrt{5}+\frac{1}{2})}
\approx 2.8808 \,.
\end{align*}

Even though $\Gamma_2$ and $\Gamma_3$ are both subnetworks of $\Gamma_1$,
their degree dimensions are neither both less than, nor both greater than, that of $\Gamma_1$.
\end{proof}

\begin{example}\label{exa:fractality}
The random colored substitution network of Fig.~\ref{fig:rule-example} and Example~\ref{exa:M-and-N} has
the associated Lyapunov exponents 
$\mathcal{L}(\mathcal{M})\approx 1.6692$ and 
$\mathcal{L}(\mathcal{N})\approx 0.9349$.
Therefore, the associated degree dimension, 
as defined by Definition~\ref{def:Scale-free Property}, is
\[
  \dim_D(\Gamma) \overset{a.s.}{=\joinrel=} \frac{\mathcal{L}(\mathcal{M})}{\mathcal{L}(\mathcal{N})} \approx \frac{1.6692}{0.9349} \approx  1.7854 \,.
\]
We obtain ten sets of simulated values for when $t=5$; see Fig.~\ref{fig:Degreet}.
Note that the data is approximately linear on a log-log plot
and that the average degree dimension from these ten randomly simulated data sets is $1.7891$, 
which, despite the low value of $t$ and the small number of simulations, 
is close to the theoretical asymptotic value $\dim_D(\Gamma) \approx 1.7854$.
\end{example}

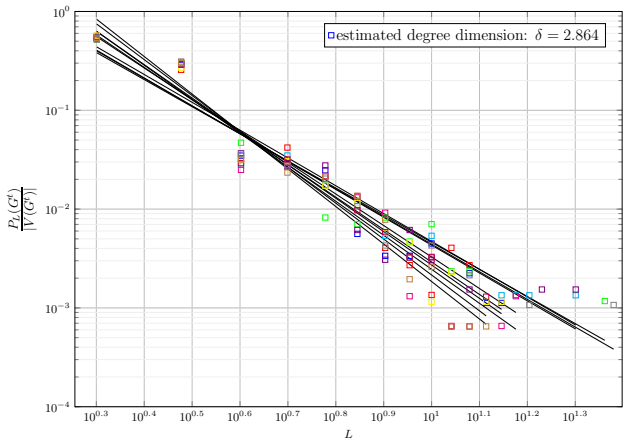
\begin{figure}[ht]
\centering
\hspace{-.5cm}
\begin{tikzpicture}[scale=.7]
\pgfplotstableread{
X1 Y1 X2 Y2 X3 Y3 X4 Y4 X5 Y5 X6 Y6 X7 Y7 X8 Y8 X9 Y9 X10 Y10
0.0116	0.495	0.0118	0.4905	0.013	0.5616	0.0125	0.5541	0.0119	0.5503	0.013	0.5203	0.0133	0.5426	0.0135	0.5604	0.012	0.552	0.0096	0.4899
0.0233	0.3844	0.0235	0.3926	0.026	0.4377	0.025	0.4365	0.0238	0.4212	0.026	0.4071	0.0267	0.4244	0.027	0.4256	0.0241	0.4224	0.0192	0.3891
0.0349	0.0268	0.0353	0.0254	0.039	0.0218	0.0375	0.022	0.0357	0.025	0.039	0.0281	0.04	0.0252	0.0405	0.0243	0.0361	0.0243	0.0288	0.0271
0.0465	0.0054	0.0471	0.006	0.0519	0.0057	0.05	0.007	0.0476	0.0072	0.0519	0.0047	0.0533	0.0079	0.0541	0.0077	0.0482	0.0063	0.0385	0.0056
0.0581	0.0454	0.0588	0.0468	0.0649	0.0464	0.0625	0.0474	0.0595	0.046	0.0649	0.0479	0.0667	0.0465	0.0676	0.0446	0.0602	0.0461	0.0481	0.0455
0.0698	0.0205	0.0706	0.0196	0.0779	0.0218	0.075	0.0207	0.0714	0.0204	0.0779	0.0167	0.08	0.0193	0.0811	0.0216	0.0723	0.0204	0.0577	0.0184
0.0814	0.003	0.0824	0.0048	0.0909	0.003	0.0875	0.0033	0.0833	0.0026	0.0909	0.0038	0.0933	0.0036	0.0946	0.002	0.0843	0.003	0.0673	0.0045
0.093	0.0021	0.0941	9.06E-04	0.1039	3.36E-04	0.1	6.68E-04	0.0952	0.0013	0.1039	0.0019	0.1067	9.82E-04	0.1081	0.0013	0.0964	9.87E-04	0.0769	0.0018
0.1163	6.02E-04	0.1176	6.04E-04	0.1299	0.001	0.125	0.001	0.119	0.0013	0.1299	0.0013	0.1333	0.0016	0.1351	0.0017	0.1205	0.0013	0.0962	0.0015
0.1279	0.0045	0.1294	0.0042	0.1429	0.0044	0.1375	0.005	0.131	0.0043	0.1429	0.005	0.1467	0.0043	0.1486	0.0037	0.1325	0.0036	0.1058	0.0048
0.1395	0.0024	0.1412	0.0027	0.1558	0.003	0.15	0.0027	0.1429	0.0026	0.1558	0.0022	0.16	0.0023	0.1622	0.002	0.1446	0.002	0.1154	0.0015
0.1512	0.0021	0.1529	0.0021	0.1688	0.003	0.1625	0.0037	0.1548	0.0023	0.1688	0.0022	0.1733	0.002	0.1757	0.0033	0.1566	0.003	0.125	0.0033
0.1628	0.0024	0.1647	0.0027	0.1818	0.0017	0.175	0.001	0.1667	0.0033	0.1818	0.0019	0.1867	0.0023	0.1892	0.0027	0.1687	0.0039	0.1346	0.0018
0.186	0.0015	0.1882	0.0015	0.2078	0.0017	0.2	0.0013	0.1905	3.29E-04	0.2078	0.0016	0.2133	0.0023	0.2162	0.0013	0.2169	3.29E-04	0.1538	0.0012
0.1977	3.01E-04	0.2118	3.02E-04	0.2208	3.36E-04	0.225	3.34E-04	0.2024	3.29E-04	0.2727	3.15E-04	0.32	3.27E-04	0.2297	3.33E-04	0.253	3.29E-04	0.1635	2.97E-04
0.2093	3.01E-04	0.3176	6.04E-04	0.3636	3.36E-04	0.325	6.68E-04	0.2143	3.29E-04	0.3896	9.46E-04	0.3467	3.27E-04	0.2973	3.33E-04	0.2892	3.29E-04	0.1731	2.97E-04
0.3023	3.01E-04	0.3647	9.06E-04	0.3766	6.72E-04	0.35	6.68E-04	0.2262	3.29E-04	0.4026	3.15E-04	0.36	3.27E-04	0.3514	3.33E-04	0.3253	0.0013	0.2115	2.97E-04
0.314	3.01E-04	0.3882	3.02E-04	0.4026	0.001	0.375	3.34E-04	0.3214	3.29E-04	0.4156	3.15E-04	0.44	3.27E-04	0.3649	3.33E-04	0.3494	3.29E-04	0.25	2.97E-04
0.3256	6.02E-04	0.4118	3.02E-04	0.7143	3.36E-04	0.3875	6.68E-04	0.3333	3.29E-04	0.4286	3.15E-04	0.4533	3.27E-04	0.3784	3.33E-04	0.3614	3.29E-04	0.2596	2.97E-04
0.3488	3.01E-04	0.4353	3.02E-04	0.8312	3.36E-04	0.475	3.34E-04	0.3571	3.29E-04	0.4416	3.15E-04	0.4667	3.27E-04	0.4054	3.33E-04	0.3735	6.58E-04	0.2692	2.97E-04
0.3605	6.02E-04	0.7882	3.02E-04	0.8571	3.36E-04	0.8125	6.68E-04	0.369	3.29E-04	0.5195	3.15E-04	0.5067	3.27E-04	0.4189	3.33E-04	0.7952	3.29E-04	0.2788	2.97E-04
0.4767	3.01E-04	0.9647	3.02E-04	1	3.36E-04	1	3.34E-04	0.4048	3.29E-04	0.8831	3.15E-04	0.7733	3.27E-04	0.4324	3.33E-04	0.8675	3.29E-04	0.2981	5.95E-04

}\mytable
    \begin{axis}[
        xmode=log,
        ymode=log,
        xmin = 0.01, xmax = 1,
        ymin = 0.0001, ymax = 1,
        width = 0.9\textwidth,
        height = 0.675\textwidth,
       % xtick distance = 1,
       % ytick distance = 1,
        grid = both,
        minor tick num = 1,
        major grid style = {lightgray},
        minor grid style = {lightgray!25},
        xlabel = {$\ell$},
        ylabel = {\Large$P_\ell(G^5)$},
        legend cell align = {left},
        legend pos = north east
        ]
        \addlegendentry{\large\;\,estimated degree dimension: $\dim_D (G^5) \approx 1.7891$}
        \addplot[color=blue,mark=square,only marks] table[x = X1, y = Y1] {\mytable};
        \addplot[thick, black] table[ x = X1, y = {create col/linear regression={y=Y1}}] {\mytable};
        \addplot[color=red,mark=square,only marks] table[x = X2, y = Y2] {\mytable};
        \addplot[thick, black] table[ x = X2, y = {create col/linear regression={y=Y2}} ] {\mytable};
        \addplot[color=green,mark=square,only marks] table[x = X3, y = Y3] {\mytable};
        \addplot[thick, black] table[ x = X3, y = {create col/linear regression={y=Y3}} ] {\mytable};
        \addplot[color=cyan,mark=square,only marks] table[x = X4, y = Y4] {\mytable};
        \addplot[thick, black] table[ x = X4, y = {create col/linear regression={y=Y4}} ] {\mytable};
        \addplot[color=magenta,mark=square,only marks] table[x = X5, y = Y5] {\mytable};
        \addplot[thick, black] table[ x = X5, y = {create col/linear regression={y=Y5}} ] {\mytable};
        \addplot[color=yellow,mark=square,only marks] table[x = X6, y = Y6] {\mytable};
        \addplot[thick, black] table[ x = X6, y = {create col/linear regression={y=Y6}} ] {\mytable};
        \addplot[color=violet,mark=square,only marks] table[x = X7, y = Y7] {\mytable};
        \addplot[thick, black] table[ x = X7, y = {create col/linear regression={y=Y7}} ] {\mytable};
        \addplot[color=gray,mark=square,only marks] table[x = X8, y = Y8] {\mytable};
        \addplot[thick, black] table[ x = X8, y = {create col/linear regression={y=Y8}} ] {\mytable};
        \addplot[color=purple,mark=square,only marks] table[x = X9, y = Y9] {\mytable};
        \addplot[thick, black] table[ x = X9, y = {create col/linear regression={y=Y9}} ] {\mytable};
        \addplot[color=brown,mark=square,only marks] table[x = X10, y = Y10] {\mytable};
        \addplot[thick, black] table[ x = X10, y = {create col/linear regression={y=Y10}} ] {\mytable};
    \end{axis}
\end{tikzpicture}
\caption{Simulations of scale-freeness for $t=5$}
\label{fig:Degreet}
\end{figure}

%%%%%%%%%%%%%%%%%%%%%%%%%%%%%%%%
\section{Graph properties}
\label{sec:Cardinality of degree}

\begin{notation}
In this paper, set function $s(t):\mathbb{Z}^+ \to \mathbb{Z}^+$ such that $s(t) \leq t$ for all $t\in\mathbb{Z}^+$, and $s(t)\to \infty$.
Without ambiguity, we will simply denote $s(t)$ by $s$.
Let $V^*(G_s^t)$ be a subset of $V(G^t)$ so that $V^*(G_s^t):=V(G^s)\setminus V(G^{s-1})$.
Also, let $v^t_{s}$ denote a fixed node in $V^*(G_{s}^t)$. 

\end{notation}

\begin{theorem}\label{thm:Degree Growth}
%Let $s(t):\mathbb{N} \to \mathbb{N}$ satisfying $s(t)\leq t$.
Almost every $\Gamma \in \mathcal{G}$ satisfies 
\[
  \deg (v_{s}^t)\overset{t\to \infty}{\asymp} \exp(\mathcal{L}(\mathcal{N}))^{t-s}\,.
\]
\end{theorem}
\begin{proof}
When an $i$-colored arc connecting $v_{s}^{t}$ is substituted randomly according to the rule graphs, the result corresponds to $T_{\mathcal{N}}^{'}(\mathbf{e}_i)$.
By considering all arcs connecting $v_{s}^{t}$, we obtain 
\[
\deg(v_{s}^{t+1})=\Vert T_{\mathcal{N}}(\boldsymbol{\delta}(v_{s}^{t})) \Vert_1\,.
\]
Let $\mathbf{x}_0=\boldsymbol{\delta}(v_{s}^{s})$, and by induction
\[
\deg_{G^t}(v_{s}^t)=\Vert T_{\mathcal{N}}^{t-s}(\mathbf{x}_0) \Vert_1 \,.
\]
Finally by Theorem~\ref{thm:stochastic substitution process} we conclude that $
\deg (v_{s(t)}^t)\overset{t\to \infty}{\asymp} \exp(\mathcal{L}(\mathcal{N}))^{t-s(t)}$ almost surely.
\end{proof}

\begin{lemma}\label{lem:degree range}
    $c^{-1} \leq \frac{\deg(v_{s(t)}^t)}{\exp(\mathcal{L}(\mathcal{N}))^{t-s(t)}}\leq c$ almost surely as $t \to \infty$.
\end{lemma}
\begin{proof}
By Theorem~\ref{thm:Degree Growth}. 
\end{proof}

\begin{theorem}\label{thm:Arc Growth}
Almost every $\Gamma \in\mathcal{G}$
satisfies
\[
  |E(G^t)|\overset{t \to \infty}{\asymp} \exp(\mathcal{L(\mathcal{M})})^t\,.
\]
\end{theorem}
\begin{proof}
The essence of this proof is similar to that of Theorem~\ref{thm:Degree Growth}.
Whenever an $i$-colored arc is substituted by some rule graph, 
the result corresponds to $T_{\mathcal{M}}^{'}(\mathbf{e}_i)$.
Collecting all arcs in $G^t$, we have, for any fixed $G_0$,
\begin{align*}
|E(G^{t+1})|
=\Vert \boldsymbol{\chi}(G^{t+1})\Vert_1 
=\Vert T_{\mathcal{M}}(\boldsymbol{\chi}(G^t))\Vert_1 
=\Vert T^{t}_{\mathcal{M}}(\boldsymbol{\chi}(G^0))\Vert_1 \,.
\end{align*}

The proof follows by induction and Theorem~\ref{thm:stochastic substitution process}.
\end{proof}

\begin{theorem}\label{thm:Node Growth}
Almost every $\Gamma \in\mathcal{G}$ satisfies 
\[
  |V(G^t)|\overset{t \to \infty}{\asymp} \exp(\mathcal{L}(\mathcal{M}))^t\,.
\]
\end{theorem}
\begin{proof}
An important observation is
\[
  |V(G^t)|=\sum_{i=0}^{t} |V^*(G_i^t)|\,.
\]
Define random vectors set $\mathcal{V}$ by
\[
\mathcal{V}=\biggl\{ \mathbf{V}=
\begin{pmatrix}
  |V(R_{1 j_1})| -2 \\
  \vdots\\
  |V(R_{\lambda j_\lambda})| -2 
\end{pmatrix}
\::\: j_i \in 1,\ldots,q_i\,,\;i\in 1,\ldots,\lambda \biggr\} \,,
\]
with probability $P(\mathbf{V})=\prod_{i=1}^\lambda p_{ij_i}$. 
All new nodes $V^*(G_t^t)$ in $V(G^t)$ are generated by substituting arcs in $G^{t-1}$, 
so it follows that, for all $t\in \mathbb{N}$,
\begin{equation*} 
 |V^{*}(G_t^t)| = T_{\mathcal{V}}(\boldsymbol{\chi}(G^{t-1}))
\,.
\end{equation*}
Consequently, with $|V(G^0)|<\infty$ and by Furstenberg and Kesten's theorem~\cite{Furstenberg60}, 
$|V(G^t)|$ equals %\vspace*{-2mm}
\begin{align*} 
\sum_{i=0}^{t} |V^*(G^{i})|
&=|V(G^0)|+\sum_{i=1}^{t} T_{\mathcal{V}}(\boldsymbol{\chi}(G^{i-1}))\\
&=|V(G^0)|+\sum_{i=1}^{t} T_{\mathcal{V}}(T^{i-1}_{\mathcal{M}}(\boldsymbol{\chi}(G^0))) \\
&=|V(G^0)|+ T_{\mathcal{V}}(\sum_{i=1}^{t} T^{i-1}_{\mathcal{M}}(\boldsymbol{\chi}(G^0))) \\
&\asymp |V(G^0)|+ \sum_{i=1}^{t} \exp(\mathcal{L}(\mathcal{M})) \\
&\asymp \exp(\mathcal{L}(\mathcal{M}))^t\,.
\end{align*} 
almost surely as $T_{\mathcal{V}}$ is bounded.
\end{proof}

%%%%%%%%%%%%%%%%%%%%%%%%%%%%%%%%%%%%%%%%%%%%%%%%%%%%
\section{Proof of Theorem~\ref{thm:random scale-free}}
\label{sec:proof of random}

This section is devoted to proving the scale-freeness for random colored substitution networks.

\bigskip

\noindent{\sl Proof of Theorem~\ref{thm:random scale-free}}.
First, we prove that $\Delta(\Gamma)=\infty$ almost surely.
Recall that we assume all matrices in $\mathcal{M}$ and $\mathcal{N}$ to be primitive.
This implies $\min_{\mathbf{N}\in \mathcal{N}}\rho(\mathbf{N})\geq~2$.
As a result, $\mathcal{L}(\mathcal{N}) \geq \log \min_{\mathbf{N}\in \mathcal{N}}\rho(\mathbf{N}) >0$.
This yields that $\deg_{G^t}(v)$ grows unboundedly for each node $v\in V(\Gamma)$.
Similar arguments also imply that $\mathcal{L}(\mathcal{M})>0$.

Now, recall that  $V^*(G_s^t) = V(G^s)\setminus V(G^{s-1})$ for all $t \in \mathbb{Z}^{+}$
and let $v^t_{s(t)}$ denote any node of $V^*(G_{s(t)}^t)$ in $G^t$ where $s(t)\leq t$.
By Theorem~\ref{thm:Degree Growth}, 
$\Delta(G^t) \overset{t \to \infty}{\asymp} \exp(\mathcal{L}(\mathcal{N}))^t$ almost surely.
Therefore,
\[
  \hat{\deg}_{G^t}(v_{s(t)}^t)
  \overset{t \to \infty}{\asymp} 
  \frac{\exp(\mathcal{L}(\mathcal{N}))^{t-s(t)}}{\exp(\mathcal{L}(\mathcal{N}))^{t}}
 =\exp(\mathcal{L}(\mathcal{N}))^{-s(t)}
 \in[0,1]\,.
\]
This implies that almost every node $v_{s(t)}^t$ in almost every $\Gamma\in\mathcal{G}$, is stationary.

Finally, we prove that almost every $\Gamma\in\mathcal{G}$ is scale-free.
Let $L\::\:\mathbb{N} \to \mathbb{N}$ be a function satisfying $L(t)/\Delta (G^t) \overset{t \to \infty}{\sim}0$.
Fix $t$ and a random colored substitution network $G^t$; 
when $t$ is large enough, we can find a function $k \::\:\mathbb{N} \to \mathbb{N}$ and $k(t)<t$ such that
\[
  \exp(\mathcal{L}(\mathcal{N}))^{k(t)}\leq L(t) \leq \exp(\mathcal{L}(\mathcal{N}))^{k(t)+1}\,.
\]
By Lemma~\ref{lem:degree range}, almost surely
$c_1^{-1}\exp(\mathcal{L}(\mathcal{N}))^{k(t)}\leq \deg(v_{k(t)}^t)\leq c_1\exp(\mathcal{L}(\mathcal{N}))^{k(t)}$, 
where $c_1$ is a constant depending on $\mathcal{N}$ and $v^t_{k(t)}$.
Take $s_0=\lceil (\log c_1)/(\mathcal{L}(\mathcal{N}))\rceil +1$ so that, for any integer $s>s_0$,
\begin{align*}
  c_1     \exp(\mathcal{L}(\mathcal{N}))^{k(t)-s}&<L(t)\\\text{and}\qquad 
  c_1^{-1}\exp(\mathcal{L}(\mathcal{N}))^{k(t)+s}&>L(t)\,.
\end{align*}
Then $k(t)-s_0<s<k(t)+s_0$ whenever $s\in\mathbb{Z}$ satisfies $\deg(v_{k(t)+s}^t)=L(t)$.
Again, note that Lemma~\ref{lem:degree range} implies that
$c_2^{-1}\exp(\mathcal{L}(\mathcal{M}))^{t-k(t)}\leq\deg(v_{k(t)}^t)\leq c_2 \exp(\mathcal{L}(\mathcal{M}))^{t-k(t)}$.
Therefore, if $P_{L(t)}(G^t)\neq 0$, 
then
\[
P_{L(t)}(G^t)\geq c_2^{-1}\exp(\mathcal{L}(\mathcal{M}))^{t-k(t)-s_0}\,.
\]
Also,\vspace*{-2mm}
\[
  P_{L(t)}(G^t) 
  \leq |\{v_{k(t)+s}^t\::\:-s_0<s<s_0,s\in\mathbb{Z} \}|
  \leq \sum_{i=-s_0}^{s_0}c_2\exp(\mathcal{L}(\mathcal{M}))^{t-k(t)+i}\,.\vspace*{-2mm}
\]
For any sufficiently small $\ell$ with $P_\ell(G^t)>0$, 
we can find large $t$ such that $\ell= L(t)/\Delta(G^t)$.
With $t$ tending to infinity,  $\dim_D (\Gamma)$ equals
\[
  \lim_{\ell \to 0}\frac{\log P_\ell(\Gamma)}{-\log \ell}
 =\lim_{t \to \infty}\frac{\log P_{L(t)}(G^t)}{-\log \frac{L(t)}{\Delta (G^t)}} \,.
\]
As discussed above, 
$P_{L(t)}(G^t) \asymp \exp(\mathcal{L}(\mathcal{M}))^{t-k(t)}$.
This implies that
 \[
 \lim_{t \to \infty}\frac{\log P_{L(t)}(G^t)}{-\log \frac{L(t)}{\Delta (G^t)}} 
 =\lim_{t \to \infty}\frac{\log\exp(\mathcal{L}(\mathcal{M}))^{t-k(t)}}{-\log\frac{\exp(\mathcal{L}(\mathcal{N}))^{k(t)}}
       {\exp(\mathcal{L}(\mathcal{N}))^t}}
 =\frac{\log\exp(\mathcal{L}(\mathcal{M}))}{\log\exp(\mathcal{L}(\mathcal{N}))}\,.
\]
Hence,
\[
\dim_D (\Gamma)
\overset{a.s.}{=\joinrel=}\dfrac{\mathcal{L}(\mathcal{M})}{\mathcal{L}(\mathcal{N})}
\,.\qquad\square
\]

%\section*{Acknowledgements}
%The first author was supported by the Additional Funding Programme for Mathematical Sciences, delivered by EPSRC (EP/V521917/1) and the Heilbronn Institute for Mathematical Research, and also by the EPSRC Centre for Doctoral Training in Mathematics of Random Systems: Analysis, Modelling and Simulation (EP/S023925/1).

%\bibliographystyle{amsplain}
\bibliography{PAMS_RCSN.bib}

\end{document}